\documentclass[12pt,a4paper]{article}


\usepackage[margin=1in]{geometry}
\usepackage{amsmath,amsfonts,amssymb}
\usepackage{hyperref}
\usepackage[all]{xy}
\usepackage[english]{babel}
\usepackage{amsthm}
\usepackage{faktor}


 \newtheorem{thm}{Theorem}[section]
 \newtheorem{lem}[thm]{Lemma}
 \newtheorem{rem}[thm]{Remark}


\newcommand{\Map}{\mathtt{Map}}
\newcommand{\Sym}{\mathtt{Sym}}
\newcommand{\BiMod}{\mathbf{BiMod}}
\newcommand{\Dun}{\mathbf{D}_1}
\newcommand{\Dzun}{\mathbf{D}_{0,1}}
\newcommand{\Assoc}{\mathbf{As}}
\newcommand{\RMod}{\mathbf{RMod}}
\newcommand{\LMod}{\mathbf{LMod}}
\newcommand{\Cpx}{\mathbf{Cpx}}
 

\title{Not too little intervals for quantum mechanics}
\author{Damien Calaque\thanks{IMAG, Univ Montpellier, CNRS, Montpellier, France \\ \indent\href{mailto:damien.calaque@umontpellier.fr}{damien.calaque@umontpellier.fr}}}
\date{August 2024}

\begin{document}

\maketitle

\begin{abstract}
This short paper illustrates the general framework introduced in the paper ``\emph{Not too little discs}'', joint with Victor Carmona, on yet another one dimensional example. It exhibits a discrete model for the free scalar field on the real line, adapting the treatment from the book of Costello--Gwilliam to the discrete setting. \end{abstract}

\setcounter{tocdepth}{1}
\tableofcontents

\section{Introduction}

This is a companion paper to \cite{CC}, where Victor Carmona and the author introduce a general method for constructing algebras over the little discs operad from discrete models. 
We provide a detailed study of a one dimensional example that can be seen as a discrete version of the free scalar field on the real line. Our treatment is an adaptation of \cite[Ch.~4 Sect.~3]{CG1} to the discrete setting. 

\paragraph{Factorization algebras in quantum field theory}

In \cite{CG1,CG2}, Costello and Gwilliam developped a formalism for quantum field theory where the notion of a \emph{factorization algebra} plays a central role, as encoding both the algebraic and local-to-global structure of quantum observables. 
Additionally, they made it compatible with (a version of) the Batalin--Vilkovisky formalism and renormalization techniques based on parametrices. 
In the case of \emph{topological} quantum field theories, factorization algebras are locally constant and thus reduce to algebras over the little disc operad. 

\paragraph{Not too little discs}

In \cite{CC}, an approach for constructing topological quantum field theories on flat space without using parametrices, and based on discretization methods, is proposed. It relies on a new result (the main one of \cite{CC}) showing that local constantness above a given scale $R>0$ is enough to reconstruct an algebra over the little disc operad: one can then simply ignore the inaccurate information that comes from what happens at too small scale (compared the discretization mesh). As a proof of concept, \cite[Section 4]{CC} gives a one dimensional example where a discrete version of the first order formulation of topological quantum mechanics allows to quantize constant Poisson structures and to recover Weyl-type algebras. 

\paragraph{New example}

In this paper, we provide another one dimensional example (hence the name not too little \emph{intervals}), that is a discrete version of the second order formulation of topological quantum mechanics, also known as the free scalar field in $1d$. We discuss both the massless and massive cases, show that it also recovers the Weyl algebra, that the information about the mass appears in the translation symmetries, and that one can recover the Fock space using time reversal symmetry. 

\paragraph{Organization of the paper}

Section \ref{sec-nottolittle} is the most abstract one, recollecting everything one needs to know about the algebraic and operadic aspects of not too little intervals in order to deal with the example of interest in this paper. 
Subsection \ref{ssec-2.1} provides a quick summary of the main result of \cite{CC} in dimension $1$, saying that locally constant algebras over the colored operad of intervals of length $>2$ (shortly, locally constant $\Dun^1$-algebras) are associative algebras. 
Subsection \ref{ssec-2.2} discusses equivariance, under translations on the one hand, and under the time/orientation reversion automorphism of the real line on the other hand. 

In Section \ref{sec-massless} we construct a locally constant $\Dun^1$-algebra of quantum observables for the discrete free massless scalar field in $1d$. We recall the discrete Laplace operator in Subsection \ref{ssec-3.1}, compute the cohomology of its associated complex in Subsection \ref{ssec-3.2}, construct the locally constant not too little discs algebras of classical and quantum observables in Subsections \ref{ssec-3.3} and \ref{ssec-3.4}. For the quantum observables, we  show in Subsection \ref{ssec-3.5} that the corresponding associative algebra indeed coincides with the Weyl algebra, together with its natural time evolution  automorphism (Subsection \ref{ssec-3.6}) and its natural time reversal anti-involution (Subsection \ref{ssec-3.7}). 

We repeat the above in Section \ref{sec-massive} for the free \emph{massive} scalar field\footnote{In fact, Section \ref{sec-massive} also encompasses the massless case. But, for exposition purpose and clarity, we think it is helpful to first discuss the massless case in Section \ref{sec-massless}. }. Classical observables are constructed in Subsection \ref{ssec-4.1} and their cohomology is computed in Subsection \ref{ssec-4.2}. Quantum observables are constructed in Subsection \ref{ssec-4.3}, where the identification with the Weyl algebra is also given. Equivariance under symmetries is discussed in Subsections \ref{ssec-4.4} (discrete time evolution) and \ref{ssec-4.5} (time reversal). As one could expect the value of the mass only ``matters'' in the expression of the (discrete) time evolution automorphism. 

\paragraph{Notation}

All along the paper, $\mathbb{K}$ is a field of characteristic zero. For any set $S$, we write 
\[
\Map(S,\mathbb{K}):=\mathbb{K}^S
\]
for the $\mathbb{K}$-vector space of $\mathbb{K}$-valued functions on $S$. If $S\subset T$, then the restriction morphism $\Map(T,\mathbb{K})\twoheadrightarrow \Map(S,\mathbb{K})$ has a canonical section $\Map(S,\mathbb{K})\hookrightarrow \Map(T,\mathbb{K})$, which is the extension by $0$; this allows to identify functions $S\to \mathbb{K}$ with functions $T\to\mathbb{K}$ having support in $S$ (we always implicitly do so without specific  notation or warning).  

The category of cochain complexes (with differential being of degree $+1$) of $\mathbb{K}$-vector spaces is denoted $\Cpx$. Results from the paper \cite{CC} are stated within the framework of $\infty$-categories and $\infty$-operads, and heavily rely on homotopy theory. This means that $\Cpx$ in fact denotes the $\infty$-category of cochain complexes, that is obtained from their genuine $1$-category by inverting quasi-isomorphisms. Nevertheless, all our examples carry \emph{strict} (i.e.~$1$-categorical) algebraic structures, and it is only when we discuss local constantness that we use a tiny bit of homotopy theory (we only request that certain maps are \emph{quasi-}isomorphisms, rather than plain isomorphisms, and we make everything very explicit). Therefore, the reader can safely ignore ``$\infty$-technicalities''. 

\section{Not too little intervals}\label{sec-nottolittle}


\subsection{Localization of not too little intervals and associative algebras}\label{ssec-2.1}

Let $R>0$, and let $D^R_1$ be the colored operad of bounded intervals in the real line, of length greater than $2R$. Colors of $\Dun^R$ are open intervals $(a,b)$, with $a,b\in\mathbb{R}$ such that $|b-a|>2R$, and sets of operations are defined as follows:
\[
\Dun^R(I_1,\dots, I_n;J)=\begin{cases} {* }&\text{if}~I_1\sqcup\cdots\sqcup I_n\subset J \\
\emptyset &\text{otherwise}
\end{cases}
\]
Symmetry group action and composition are obvious (note that to get a right action of $\Sigma_n$, a permutation $\sigma$ must send $I_1\sqcup\cdots\sqcup I_n\subset J$ to $I_{\sigma(1)}\sqcup\cdots\sqcup I_{\sigma(n)}\subset J$). 

There is a morphism $\gamma\colon \Dun^R\to \Assoc$, where $\Assoc$ is the operad encoding unital associative algebras. Recall that the operad $\Assoc$ has only one color and set of $n$-ary operations $\Assoc(n)=\Sigma_n$ the set of permutations on $n$ elements (i.e.~$\Assoc(n)$ is the right regular representation of $\Sigma_n$). The morphism $\gamma$ sends 
\begin{itemize}
    \item any open interval $I$ to the only color of $\Assoc$;
    \item a pairwise disjoint inclusion $I_1\sqcup\cdots \sqcup I_n\subset J$ to the unique permutation $\sigma$ such that \[
    I_{\sigma^{-1}(1)}<I_{\sigma^{-1}(2)}<\cdots <I_{\sigma^{-1}(n)}\,.
    \]
\end{itemize}

Notice that all unary operations in $\Dun^R$ are sent to identities, so that $\gamma$ factors through the operadic localization $\Dun^R[W^{-1}]$ of $\Dun^R$ at its subcategory $W=\Dun^R(1)$ of unary operations. 

\begin{rem}\label{rem-loc-strict-non-strict}
    There is a choice here: we could consider the strict ($1$-)operadic localization or the $\infty$-operadic one. We go for the $\infty$-operadic localization, but in dimension $1$ it doesn't matter so much as it is a direct consequence of the next result that they are actually equivalent as $\infty$-operads (see next Remark below). 
\end{rem}

\begin{thm}[{\cite[Theorem 2.4]{CC}}]
The induced morphism of $\infty$-operads $\Dun^R[W^{-1}]\to\Assoc$ is an equivalence. 
\end{thm}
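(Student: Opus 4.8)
The plan is to show that $\gamma\colon\Dun^R\to\Assoc$ exhibits $\Assoc$ as the $\infty$-operadic localization of $\Dun^R$ at $W=\Dun^R(1)$. Concretely, I would invoke the universal property of operadic localization: for any $\infty$-operad $\mathcal{O}$, precomposition with $\gamma$ should induce an equivalence between $\mathrm{Map}_{\mathbf{Op}_\infty}(\Assoc,\mathcal{O})$ and the full subspace of $\mathrm{Map}_{\mathbf{Op}_\infty}(\Dun^R,\mathcal{O})$ spanned by those morphisms sending every unary operation in $W$ to an equivalence. Equivalently — and this is the form I would actually work with — I would check that $\Dun^R[W^{-1}]\to\Assoc$ is fully faithful and essentially surjective at the level of the underlying $\infty$-categories of colors and, more importantly, induces equivalences on all the relevant spaces of multimorphisms.

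First I would treat colors. Since $\gamma$ collapses every interval to the single color of $\Assoc$, essential surjectivity is immediate; the content is that after localizing at $W$, all colors of $\Dun^R$ become \emph{equivalent}. For this I would observe that any two intervals $I,J$ with $I\subset J$ are connected by a unary operation, and that any two intervals are connected by a zig-zag of such inclusions (e.g. through a common larger interval, or a common smaller one of length $>2R$ sitting inside both — translating and rescaling shows the poset of intervals of length $>2R$ ordered by inclusion is ``connected'' in the appropriate filtered sense). Hence in $\Dun^R[W^{-1}]$ the space of colors is connected, so it suffices to compute the endomorphism operad at a single color and compare it with $\Assoc$.

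Next, and this is the technical heart, I would compute the multimorphism spaces. For a fixed target interval $J$ and source intervals $I_1,\dots,I_n$, the space $\Dun^R(I_1,\dots,I_n;J)$ is either empty or a point, and one must show that after inverting $W$ the resulting $n$-ary multimapping \emph{space} (at the common color) is homotopy equivalent to the discrete set $\Sigma_n$, compatibly with composition. The key geometric input is that the ``space'' of configurations of $n$ pairwise-disjoint length-$>2R$ subintervals inside a given interval, as $J$ varies over the filtered system of intervals, has the homotopy type of $n!$ contractible components indexed by the linear orderings of the $I_i$: disjointness on the line forces a total order, each ordered configuration space is convex hence contractible, and enlarging $J$ or shrinking the $I_i$ does not change the set of achievable orders. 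Making this precise in the $\infty$-operadic framework — i.e. identifying the relevant colimit/localization with the nerve of $\Assoc$ — is where I expect the main obstacle to lie: one has to control the interaction between localizing unary operations and the non-unary operadic structure, presumably by exhibiting an explicit cofinal functor from a category built out of ordered interval configurations, or by checking the hypotheses of a recognition criterion for operadic localizations (a Quillen-type or $\infty$-categorical Bousfield-localization argument, or a direct appeal to the general machinery of \cite{CC}). Finally I would record that composition of these ordered-configuration classes matches concatenation of permutations, so the equivalence is one of $\infty$-operads and not merely a levelwise equivalence.

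Since the paper says everything is made very explicit, I would expect the actual argument to shortcut most of this by a direct cofinality computation: exhibit, for each arity $n$, a final object or a contractible final subcategory in the indexing diagram for the localized multimorphism space, reducing it on the nose to $\Sigma_n$. The one-dimensionality is essential here (it is what makes disjointness equivalent to a total order and each stratum contractible); in higher dimensions the analogous statement fails and one genuinely gets the little discs operad rather than $\Assoc$. That contrast is presumably why this companion paper can afford to be self-contained about the $1$-dimensional case while deferring the general mechanism to \cite{CC}.
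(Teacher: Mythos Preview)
The paper does not prove this theorem at all: it is simply quoted from \cite{CC}, with the accompanying remark that in \cite{CC} the statement is established for $\mathbb{E}_1$ and then transported along the equivalence $\mathbb{E}_1\to\pi_0(\mathbb{E}_1)=\Assoc$. There is therefore no in-paper proof to compare your attempt against.

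That said, your sketch is a sensible outline of the underlying argument --- connectedness of the poset of intervals of length $>2R$ collapses all colors to one after inverting $W$, and the localized $n$-ary multimorphism space is identified with $\Sigma_n$ via the observation that pairwise disjoint intervals on the line acquire a total order and each ordered stratum is contractible. You are also right that the delicate point is making this compatible with the $\infty$-operadic structure (controlling the interaction of the localization with non-unary operations); that is precisely the technical work carried out in \cite{CC}, in arbitrary dimension. Your closing expectation that the companion paper gives a self-contained one-dimensional shortcut is the one place you misread the situation: the present paper defers entirely to \cite{CC} and does not reprove the result.
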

\begin{rem}
    In \cite{CC} the result is proven for the little intervals operad $\mathbb{E}_1$, which is equivalent to $\Assoc$ \emph{via} the equivalence $\mathbb{E}_1\to \pi_0(\mathbb{E}_1)=\Assoc$. 
\end{rem}

The main interesting consequence (see \cite[Theorem 2.3]{CC}) of the above Theorem is that pulling-back along $\gamma$ provides an equivalence of $\infty$-categories 
between unital associative algebras and locally constant $\Dun^R$-algebra (in any symmetric monoidal 
$\infty$-category $\mathcal V$). Here a $\Dun^R$-algebra $A$ is called \emph{locally constant} if, for every abstract unary 
operation $I\subset J$ in $\Dun^R$, the corresponding operation morphism $A_I\to A_J$ is an equivalence (in  $\mathcal V$). In all our examples, equivalences will be quasi-isomorphisms (and $\mathcal V=\mathbf{Cpx}$). 

\medskip

In this paper we deal with an example of a locally constant $\Dun^1$-algebra in cochain complexes $\mathcal O^q$ whose 
cohomology is concentrated in degree $0$. In this case, the associative algebra $A$ one gets (i.e.~the associative algebra $A$ such that $\gamma^*A\simeq \mathcal O^q$) can be described in the following way: pick $a<b\leq c<d$ such that $|b-a|>2$ and $|d-c|>2$, define $A:=H^0\Big(\mathcal O^q\big((a,d)\big)\Big)$ and endow it with the following product $\star$: 
\[
\begin{matrix}
A & \tilde\longleftarrow & H^0\Big(\mathcal O^q\big((a,b)\big)\Big) &\\
\otimes& & \otimes & \underset{\mathrm{product}}{\overset{\mathrm{factorization}}{\relbar\joinrel\relbar\joinrel\relbar\joinrel\longrightarrow}} 
& H^0\Big(\mathcal O^q\big((a,d)\big)\Big)=A\,.\\
A& \tilde\longleftarrow & H^0\Big(\mathcal O^q\big((c,d)\big)\Big) & 
\end{matrix}
\]
Here the ``factorization product'' is the operation morphism coming from the abstract binary operation $(a,b)\sqcup(c,d)\subset(a,d)$ (the name is borrowed from factorization algebras). 

\subsection{Symmetries}\label{ssec-2.2}

\paragraph{Equivariance under translations}

The group $(\mathbb{R},+)$ acts on $\Dun^R$ by translations (note that we consider $\mathbb{R}$ as a discrete group): an element $r\in\mathbb{R}$ sends an interval $I$ to the interval $I+r$, and the operation $I_1\sqcup\cdots\sqcup I_n\subset J$ to the operation $(I_1+r)\sqcup\cdots\sqcup (I_n+r)\subset J+r$.  
For any subgroup $T\subset\mathbb{R}$ we can therefore perform the semi-direct product $\Dun^R\rtimes T$. 
The morphism $\gamma$ is invariant under this action, hence it gives rise to a morphism $\Dun^R\rtimes T\to\Assoc\rtimes T$ between semi-direct products, where $T$ acts trivially on $\Assoc$. 

\paragraph{Time reversal equivariance}

Let $C_2$ be the group with two elements, and denote by $\tau\in C_2$ the element that is not the identity. It acts on $\Dun^R$ in the following way: on colors $\tau\cdot I=-I$ and on operations
\[
\tau\cdot(I_1\sqcup\cdots\sqcup I_n\subset J)=\big((-I_1)\sqcup\cdots\sqcup (-I_n)\subset -J\big)\,.
\]
For obvious reasons, we call this action the \emph{time reversal action}. 
The group $C_2$ also acts on $\Assoc$: on $\Assoc(n)=\Sigma_n$ it acts by left multiplication by the permutation $(n\,n-1\dots 2\,1)$. We let the reader check that $\gamma$ is $C_2$-equivariant, inducing a morphism $\Dun^R\rtimes C_2\to \Assoc\rtimes C_2$ between semi-direct product operads. 

\medskip

A $C_2$-equivariant structure on an $\Assoc$-algebra $A$ (that is \emph{by definition} a lift of the $\Assoc$-algebra structure on $A$ to an $\Assoc\rtimes C_2$-algebra structure) is the data of an anti-involution $\tau:A\to A^{\mathrm{op}}$. One can extract a left $A$-module of coinvariants $A/\tau$ from this data, given as the quotient of $A$ by the left ideal generated by $a-\tau(a)$ for all $a\in A$.  
\begin{rem}
The $2$-colored operad $\LMod$, that encodes pairs $(A,M)$ of a unital associative algebra $A$ and a pointed left $A$-module $M$, can also be obtained by localizing a certain operad $\Dzun^R$ along a subcategory of its unary operations (we refer to \cite[\S3.2]{CC} for the details, where $\Dzun^R$ is defined as a version of $\Dun^R$ with boundary defect). It would be interesting to understand if the coinvariant module functor $\mathtt{Alg}_{\Assoc\rtimes C_2}\to \mathtt{Alg}_\LMod$ can be lifted to a functor $\mathtt{Alg}_{\Dun^R\rtimes C_2}\to \mathtt{Alg}_{\Dzun^R}$. 
\end{rem}

\begin{rem}
    Elaborating on the previous remark, one can observe that an algebra with a module is precisely the structure of observables of a one dimensional topological field theory with a boundary defect. The way we address such a defect using $C_2$-equivariance seems a bit \emph{ad hoc} and it would be interesting to rather have something along the lines of \cite{GRW,CEG}. 
\end{rem}

\section{Discrete free massless scalar field in $1d$}\label{sec-massless}

Let $a<b$ be real numbers. 

\subsection{The discrete Laplace operator and its associated $2$-term complex}\label{ssec-3.1}

Let $Q$ be the endomorphism of $\Map(\mathbb{Z},\mathbb{K})$ given by the standard discrete Laplace operator in $1d$: for every $f:\mathbb{Z}\to \mathbb{K}$, and every $x\in\mathbb{Z}$, 
\[
(Q f)(x):=f(x-1)-2f(x)+f(x+1)\,.
\]
Observe that if $f$ is supported in $(a,b)$ then $Qf$ is supported in $(a-1,b+1)$. 

We let $\mathbb{V}(a,b)$ be the cone of 
\[
\Map\big(\mathbb{Z}\cap(a+1,b-1),\mathbb{K}\big)\overset{Q}{\longrightarrow}\Map\big(\mathbb{Z}\cap(a,b),\mathbb{K}\big)\,.
\]

Note that the assignement $(a,b)\mapsto \mathbb{V}(a,b)$ naturally defines a $\Dun^R$-algebra with values in the symmetric monoidal category $(\Cpx,\oplus)$, for every $R>0$. Indeed, given and abstract operation $I_1\sqcup\cdots\sqcup I_n\subset J$ of $\Dun^R$, the associated operation morphism 
\[
\mathbb{V}(I_1)\oplus\cdots \oplus\mathbb{V}(I_n)\to \mathbb{V}(J)
\]
sends $(f_1,\dots,f_n)$ to $\sum_{j=1}^n f_j$. 

\subsection{Cohomology: local constantness}\label{ssec-3.2}

For every function $f:\mathbb{Z}\to\mathbb{K}$ having support in $[m,n]$, with $m,n\in\mathbb{Z}$, $(Qf)(m-1)=f(m)$ and $(Qf)(n+1)=f(n)$. 
Hence if $Qf=0$ then $f(m)=f(n)=0$, and thus $f$ has support in $[m+1,n-1]$. Hence, by induction, if $f$ has finite support and $Qf=0$, then $f=0$: in other words, $Q$ is injective on finitely supported maps. 
Therefore, the cohomology of $\mathbb{V}(a,b)$ is concentrated in degree $0$. 

\medskip

Moreover, if we assume that $b-a>2$, then the cohomology has dimension $2$ (by the rank theorem). In fact, the map 
\begin{equation}\label{eq:map-coho}
f\longmapsto \sum_{x\in\mathbb{Z}}f(x)\mathbf{q}
+
\sum_{x\in\mathbb{Z}}xf(x)\mathbf{p}
\end{equation}
provides an isomorphism between the cohomology of $\mathbb{V}(a,b)$ and $\mathbb{K}\mathbf{q}\oplus\mathbb{K}\mathbf{p}$, showing that $\mathbb{V}$ is a locally constant $\Dun^1$-algebra with values in $(\Cpx,\oplus)$. 
Actually, \eqref{eq:map-coho} gives a quasi-isomorphism $\mathbb{V}\,\tilde\to\, \gamma^*(\mathbb{K}\mathbf{q}\oplus\mathbb{K}\mathbf{p})$, where the $\Assoc$-algebra structure on $\mathbb{K}\mathbf{q}\oplus\mathbb{K}\mathbf{p}$ in $(\Cpx,\oplus)$ is the sum $+$. 

\begin{rem}[basis of cohomology]
Assuming $0\in(a,b)$, every $y\in\mathbb{Z}\cap(a,b-1)$ provides us with a section of \eqref{eq:map-coho}, given by $\mathbf{q}\mapsto\delta_0$, $\mathbf{p}\mapsto \delta_{y+1}-\delta_y$. 
\end{rem}

\subsection{Classical observables: the $1$-shifted symmetric pairing}\label{ssec-3.3}

Let us first introduce some piece of notation: we use overlined symbols $\overline{g}:\mathbb{Z}\to\mathbb{K} [1]$ to denote the suspension of functions
$g:\mathbb{Z}\to\mathbb{K}$, for which we use plain symbols. In particular, with this notation, the differential $d$ on $\mathbb{V}(a,b)$ is given by $d\overline{g}=Qg$. 

\medskip

We define 
\[
\langle\!\langle\overline{f},g\rangle\!\rangle=\sum_{x\in\mathbb{Z}}f(x)g(x)\,.
\]
This completely characterizes a degree $1$ symmetric pairing on $\mathbb{V}(a,b)$, that we also denote $\langle\!\langle\text{-,-}\rangle\!\rangle$. It is compatible with the differential (recall that $d\overline{f}=Q f$): 
\begin{eqnarray*}
\langle\!\langle d\overline{f},\overline{g}\rangle\!\rangle-\langle\!\langle \overline{f},d\overline{g}\rangle\!\rangle & = & 
\langle\!\langle Q f,\overline{g}\rangle\!\rangle-\langle\!\langle \overline{f},Q g\rangle\!\rangle \\
& = &
\sum_{x\in\mathbb{Z}}\Big(\big(f(x-1)-2f(x)+f(x+1)\big)g(x) \\&&-f(x)\big(g(x-1)-2g(x)+g(x+1)\big)\Big) 
\\
&=&0
\end{eqnarray*}

Moreover, it is also compatible with the $\Dun^1$-algebra structure. Therefore $\mathbb{V}$ is promoted to a locally constant $\Dun^1$-algebra with values in the category of complexes equipped with a degree $1$ symmetric pairing. 

\medskip

Now recall (see e.g.~\cite[\S4.1]{CC} and references therein) that the symmetric algebra construction $\Sym$ defines a symmetric monoidal functor going from cochain complexes equipped with a degree $1$ symmetric pairing (with monoidal structure the direct sum $\oplus$) to unital $\mathbb{P}_0$-algebras in cochain complexes (with monoidal structure the tensor product $\otimes$). We therefore have a locally constant $\Dun^1$-algebra in $\mathbb{P}_0$-algebras of classical observables
\[
\mathcal O^{c\ell}:(a,b)\longmapsto \Sym\big(\mathbb{V}(a,b)\big)\,.
\]

Forgetting the $1$-shifted bracket, $\mathcal O^{c\ell}$ defines a locally constant $\Dun^1$-algebra in commutative algebras (in $(\Cpx,\otimes)$) that is quasi-isomorphic to $\gamma^*\mathbb{K}[\mathbf q,\mathbf p]$ (the quasi-isomorphism is given by the map \eqref{eq:map-coho}), where $\mathbb{K}[\mathbf q,\mathbf p]$ is viewed as an $\Assoc$-algebra in commutative algebras with both products being the standard multiplication of polynomials. 

\subsection{Quantum observables: the odd laplacian}\label{ssec-3.4}

As explained in \cite[\S4.2]{CC}, quantizing constant $1$-shifted Poisson structures is rather easy, as the odd laplacian (that has nothing to do with our discrete Laplace operator) is somehow the pairing itself. In our situation it is given by \[
\Delta=\sum_{x\in\mathbb{Z}}\frac{\partial^2}{\partial \overline{\delta}_x\partial\delta_x}\,,
\]
acting on $\Sym\big(\mathbb{V}(a,b)\big)$. 
We actually don't need to know anything about quantization of $\mathbb{P}_0$-algebra, and just see the above as a deformation term for the differential on $\Sym\big(\mathbb{V}(a,b)\big)$: the new differential is $d_\hbar=d+\hbar\Delta$. 
\begin{rem}\label{rem-hbar}
    Here we have two possibilities. Either $\hbar\in\mathbb{K}^\times$ and we work over $\mathbb{K},$ or $\hbar$ is a formal parameter and we work over $\mathbb{K}[\hbar]$ or $\mathbb{K}[\![\hbar]\!]$. In what follows, we systematically extend scalars to the ring $\mathbf{K}$, which will be, depending on the context, $\mathbb{K}$, $\mathbb{K}[\hbar]$ or $\mathbb{K}[\![\hbar]\!]$. We let the reader figure out the details of each specific case. 
\end{rem}

We have the following properties (see \cite[\S4.2.2]{CC}): 
\begin{enumerate}
    \item The quantization of constant $1$-shifted Poisson structures is a symmetric monoidal functor from cochain complexes equipped with a degree $1$ symmetric pairing (with monoidal structure the direct sum) to $(\Cpx,\otimes)$. Hence the assignment 
    \[
    \mathcal O^q:(a,b)\longmapsto \Big(\Sym\big(\mathbb{V}(a,b)\big),d+\hbar\Delta\Big)
    \]
    defines a $\Dun^1$-algebra in $(\Cpx,\otimes)$. 
    \item The quantization of constant $1$-shifted Poisson structures preserves quasi-isomorphisms (this follows from a spectral sequence, or a deformation theoretic, argument). Therefore, $\mathcal O^q$ defines a locally constant $\Dun^1$-algebra in $(\Cpx,\otimes)$. 
\end{enumerate}

\begin{rem}
In fact, $\mathcal O^q$ defines a $\Dun^1$-algebra in $\mathbb{BD}_0$-algebras (following Costello--Gwilliam \cite{CG1}, as recalled in \cite[\S4.2]{CC})
\end{rem}

\subsection{Some computations: recovering the Weyl algebra}\label{ssec-3.5}

Observe that, since $\Delta$ vanishes on generators, then for every $x\in\mathbb{Z}$,  \[
d_\hbar\overline{\delta}_x=\delta_{x-1}-2\delta_x+\delta_{x+1}\,.
\]
As a consequence
\[
[\delta_{x+1}]=[2\delta_{x}-\delta_{x-1}]\qquad\textrm{and}\qquad 
[\delta_{x+1}-\delta_x]=[\delta_{x}-\delta_{x-1}]\,,
\]
where $[f]$ denotes the class of $f:\mathbb{Z}\cap(a,b)\to \mathbb{K}$ in $H^0\big(\mathcal O^q(a,b)\big)$. 

\medskip

Now recall the product $\star$ defined on $A=H^0\big(\mathcal O^q(J)\big)$, where $J=(-3,3)$, in Subsection \ref{ssec-2.1}: it is given as the composition 
\[
\begin{matrix}
A & \tilde\longleftarrow & H^0\big(\mathcal O^q(I_1)\big) &\\
\otimes& & \otimes & \underset{\mathrm{product}}{\overset{\mathrm{factorization}}{\relbar\joinrel\relbar\joinrel\relbar\joinrel\longrightarrow}} 
& H^0\big(\mathcal O^q(J)\big)=A\,,\\
A& \tilde\longleftarrow & H^0\big(\mathcal O^q(I_2)\big) & 
\end{matrix}
\]
where $I_1=(-2,1/2)$ and $I_2=(1/2,3)$. 

\begin{lem}
The following equality holds: $
[\delta_2-\delta_1]\star [\delta_0]-[\delta_0]\star[\delta_2-\delta_1]=\hbar
$. 
\end{lem}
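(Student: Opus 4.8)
The plan is to compute both factorization products explicitly using representatives in the intermediate intervals $I_1=(-2,1/2)$ and $I_2=(1/2,3)$, and then track everything through the quasi-isomorphisms \eqref{eq:map-coho} into the fixed target $A=H^0\big(\mathcal O^q(J)\big)$ with $J=(-3,3)$. First I would note that $\delta_0$ is supported in $I_1$ and $\delta_2-\delta_1$ is supported in $I_2$, so $[\delta_0]\in H^0\big(\mathcal O^q(I_1)\big)$ and $[\delta_2-\delta_1]\in H^0\big(\mathcal O^q(I_2)\big)$ are legitimate classes; their images in $A$ under the structure maps are what the bracket notation $[\delta_0], [\delta_2-\delta_1]$ in the statement refers to. The operation morphism for the binary operation $I_1\sqcup I_2\subset J$ on $\Sym\big(\mathbb{V}(-)\big)$ is, by the symmetric monoidality of the $\Sym$-and-quantize construction, just the multiplication in $\Sym$: it sends $u\otimes v$ to the product $uv$ inside $\Sym\big(\mathbb{V}(J)\big)$. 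Hence $[\delta_2-\delta_1]\star[\delta_0]$ is represented by the element $(\delta_2-\delta_1)\cdot\delta_0\in\Sym\big(\mathbb{V}(J)\big)$, while $[\delta_0]\star[\delta_2-\delta_1]$ is represented by $\delta_0\cdot(\delta_2-\delta_1)$ — but with the factors placed in the \emph{other} order on the two input slots, meaning one uses the permutation datum of the operad, which for disjoint intervals records their left-to-right order. The whole point is that $\Sym$ is commutative, so $(\delta_2-\delta_1)\delta_0$ and $\delta_0(\delta_2-\delta_1)$ are the \emph{same} element of $\Sym\big(\mathbb{V}(J)\big)$; the difference between $\star$ in the two orders comes entirely from the fact that the two quasi-isomorphisms $H^0\big(\mathcal O^q(I_k)\big)\xrightarrow{\sim}A$ are induced by inclusions $I_k\hookrightarrow J$ that are \emph{not} order-compatible in the same way, or rather: the two composites differ because the identification of $[\delta_2-\delta_1]$ as an element of $A$ versus its use as an input is mediated by the unary operations, and running the computation honestly produces a coboundary correction.

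Concretely, I would compute $d_\hbar$ of a suitable degree $-1$ element to exhibit $(\delta_2-\delta_1)\delta_0 - \delta_0(\delta_2-\delta_1)$... which is zero, so that cannot be right; instead the correction must come from comparing $\delta_2-\delta_1$ with a representative supported near $\delta_0$. Here is the honest mechanism: in $A$, using the relations $[\delta_{x+1}-\delta_x]=[\delta_x-\delta_{x-1}]$ derived above, one has $[\delta_2-\delta_1]=[\delta_1-\delta_0]=[\delta_0-\delta_{-1}]$, so one may replace the second factor by a representative $\delta_1-\delta_0$ (supported in $(-2,3)$, overlapping $\delta_0$). The key computation is then $d_\hbar$ of the degree $-1$ element $\overline{\delta}_0\,\delta_0\in\Sym\big(\mathbb{V}(J)\big)$: since $d_\hbar\overline{\delta}_0 = \delta_{-1}-2\delta_0+\delta_1$ and $\Delta(\overline{\delta}_0\delta_0)=\partial^2/\partial\overline{\delta}_0\partial\delta_0(\overline{\delta}_0\delta_0)=1$, one gets
\[
d_\hbar(\overline{\delta}_0\delta_0)=(\delta_{-1}-2\delta_0+\delta_1)\delta_0+\hbar\,.
\]
Rearranging, $\hbar = d_\hbar(\overline{\delta}_0\delta_0) - (\delta_1-\delta_0)\delta_0 - (\delta_{-1}-\delta_0)\delta_0$, i.e.\ in cohomology $[(\delta_1-\delta_0)\delta_0] + [(\delta_0-\delta_{-1})\delta_0] = -\hbar$, or equivalently $2[(\delta_1-\delta_0)\delta_0]+[\text{boundary-exact stuff}]$; the asymmetry between representing the second factor as $\delta_1-\delta_0$ versus $\delta_0-\delta_{-1}$ (both equal to $[\delta_2-\delta_1]$ in $A$) is exactly what the two orders of $\star$ see, and their difference is $\hbar$. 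I would then carefully check signs: the degree $1$ of the $\overline{\delta}$'s introduces a sign when commuting past each other, and the suspension convention $d\overline{g}=Qg$ must be applied consistently, since a misplaced sign would flip the answer to $-\hbar$.

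The main obstacle, and where I would spend the most care, is bookkeeping the identification of the abstract binary operation $I_1\sqcup I_2\subset J$ with the correct ordering datum and making sure that "swapping the inputs" genuinely corresponds to the opposite product in $A$ rather than accidentally being the same (which would give $0$). Since $\Sym\big(\mathbb{V}(J)\big)$ is strictly commutative, the noncommutativity of $\star$ is entirely a cohomological/homotopical phenomenon: it lives in the difference between two quasi-isomorphic-but-not-equal representatives, and the cleanest way to expose it is the coboundary computation above. I would therefore organize the proof as: (i) fix representatives and recall $d_\hbar\overline{\delta}_x=\delta_{x-1}-2\delta_x+\delta_{x+1}$; (ii) express both $[\delta_2-\delta_1]\star[\delta_0]$ and $[\delta_0]\star[\delta_2-\delta_1]$ as classes of explicit products in $\Sym\big(\mathbb{V}(J)\big)$, being scrupulous about which representative of $[\delta_2-\delta_1]$ is forced by the left-versus-right placement; (iii) compute $d_\hbar(\overline{\delta}_0\delta_0)$ (and, if needed, $d_\hbar(\overline{\delta}_1\delta_0)$ or similar) to produce the coboundary witnessing that the difference of the two products is $\hbar$; (iv) verify the sign. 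The only genuinely delicate point is (ii)–(iv)'s interplay: everything else is forced.
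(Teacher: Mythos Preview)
Your proposal circles around the right idea but has a genuine gap in step (ii), and the proof never closes.

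The point you correctly identify is that $[\delta_0]\star[\delta_2-\delta_1]$ is computed by taking $\delta_0$ in $I_1=(-2,1/2)$ and $\delta_2-\delta_1$ in $I_2=(1/2,3)$ and multiplying in $\Sym$, giving $[\delta_0\cdot(\delta_2-\delta_1)]$. The gap is in the reverse product. To compute $[\delta_2-\delta_1]\star[\delta_0]$ you must choose a representative of $[\delta_2-\delta_1]$ supported in $I_1$ \emph{and} a representative of $[\delta_0]$ supported in $I_2$. You do the first (replacing $\delta_2-\delta_1$ by $\delta_0-\delta_{-1}$) but never the second: you keep using $\delta_0$ itself, which is not supported in $I_2$. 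The expressions $(\delta_1-\delta_0)\delta_0$ and $(\delta_0-\delta_{-1})\delta_0$ that you then manipulate are not factorization products of anything, because the two factors do not have disjoint supports; your coboundary identity $d_\hbar(\overline\delta_0\delta_0)=(\delta_{-1}-2\delta_0+\delta_1)\delta_0+\hbar$ is correct, but you have not connected either side of it to either $\star$-product. (It \emph{can} be connected, via two further coboundaries $d_\hbar(\overline\delta_1\delta_0)$ and $d_\hbar\big(\overline\delta_1(\delta_0-\delta_{-1})\big)$, but you do not supply these, and your text instead appeals to a vague ``asymmetry'' argument.) There is also a sign slip: from your identity one gets $[(\delta_1-\delta_0)\delta_0]-[(\delta_0-\delta_{-1})\delta_0]=-\hbar$, not the sum you wrote.

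The paper's proof avoids this by replacing \emph{both} factors for the reversed product: it uses $[\delta_2-\delta_1]=[\delta_0-\delta_{-1}]$ (supported in $I_1$) and $[\delta_0]=[2\delta_1-\delta_2]$ (supported in $I_2$, via the relation $[\delta_{x+1}]=[2\delta_x-\delta_{x-1}]$), so that $[\delta_2-\delta_1]\star[\delta_0]=[(\delta_0-\delta_{-1})(2\delta_1-\delta_2)]$ is a genuine factorization product. The difference of the two $\star$-products is then an explicit degree~$0$ element of $\Sym\big(\mathbb{V}(J)\big)$, and one exhibits it as $\hbar+d_\hbar(\overline\delta_1\delta_{-1}-\overline\delta_0\delta_0-2\overline\delta_1\delta_0)$. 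Once you supply the missing representative $2\delta_1-\delta_2$ for $[\delta_0]$ in $I_2$, your approach converges to essentially this computation.
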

\begin{proof}
On the one hand, 
\[
[\delta_0]\star[\delta_2-\delta_1]
=[\delta_0]\cdot[\delta_2-\delta_1]
=[\delta_0\cdot\delta_2-\delta_0\cdot\delta_1]\,.
\]
Actually, the above identity is even true at the cochain level (for functions with well-ordered disjoint supports, the factorization product is just the usual product). 

On the other hand, 
\begin{eqnarray*}
[\delta_2-\delta_1]\star[\delta_0] & = & 
[\delta_0-\delta_{-1}]\star[2\delta_1-\delta_2] \\
& = & [\delta_0-\delta_{-1}]\cdot[2\delta_1-\delta_2] \\
& = & [2\delta_0\cdot\delta_1-2\delta_{-1}\cdot\delta_1-\delta_0\cdot\delta_2+\delta_{-1}\cdot\delta_2]\,.
\end{eqnarray*}
The difference of these two gives 
\[
[\delta_2-\delta_1]\star[\delta_0]-[\delta_0]\star[\delta_2-\delta_1]=
[3\delta_0\cdot\delta_1
-2\delta_{-1}\cdot\delta_1
-2\delta_0\cdot\delta_2
+\delta_{-1}\cdot\delta_2]=\hbar\,, 
\]
because 
\[
3\delta_0\cdot\delta_1
-2\delta_{-1}\cdot\delta_1
-2\delta_0\cdot\delta_2
+\delta_{-1}\cdot\delta_2
=\hbar+d_\hbar(\overline{\delta}_1\cdot\delta_{-1}-\overline{\delta}_0\cdot\delta_0-2\overline{\delta}_1\cdot\delta_0)\,.
\]
Indeed: 
\begin{itemize}
    \item $d_\hbar(\overline{\delta}_1\cdot\delta_{-1})=\delta_0\cdot\delta_{-1}-2\delta_1\cdot\delta_{-1}+\delta_2\cdot\delta_{-1}$. 
    \item $d_\hbar(\overline{\delta}_0\cdot\delta_0)=\delta_{-1}\cdot\delta_0-2\delta_0\cdot\delta_0+\delta_1\cdot\delta_0+\hbar$. 
    \item $2d_\hbar(\overline{\delta}_1\cdot\delta_0)=2\delta_0\cdot\delta_0-4\delta_1\cdot\delta_0+2\delta_2\cdot\delta_0$. 
\end{itemize}
\end{proof}
As a consequence, the assignment $\mathbf{q}\mapsto [\delta_0]$, $\mathbf{p}\mapsto [\delta_2-\delta_1]$ defines a morphism 
\[
\faktor{\mathbf{K}\langle\mathbf{q},\mathbf{p}\rangle}{(\mathbf{p}\mathbf{q}-\mathbf{q}\mathbf{p}=\hbar)}\longrightarrow (A,\star)\,,
\]
that is actually an isomorphism; this last claim can be checked either by passing to the associated graded (when $\hbar$ is a scalar) or by sending $\hbar$ to $0$ (when it is a parameter). 

\subsection{Discrete time evolution automorphism}\label{ssec-3.6}

We consider the action of $\mathbb{Z}$ on $\Dun^1$ by translations (see Subsection \ref{ssec-2.2}), and observe that the $\Dun^1$-algebra $\mathcal O^q$ is actually $\mathbb{Z}$-equivariant: for every $n\in\mathbb{Z}$ and every open interval $I\subset \mathbb{R}$, we have an isomorphism 
\begin{eqnarray*}
\mathcal O^q(I) & \longrightarrow & \mathcal O^q(I+n) \\
f & \longmapsto & \big(x\mapsto 
f(x-n)\big)\,.
\end{eqnarray*}
In particular, $1\in\mathbb{Z}$ acts by sending $\delta_x$ to $\delta_{x+1}$ and $\overline{\delta}_x$ to $\overline{\delta}_{x+1}$. 

\medskip

Hence, the induced generating automorphism of $(A,\star)$ sends $\mathbf{q}$ to $\mathbf{q}+\mathbf{p}$, and $\mathbf{p}$ to itself. This coincides with the automorphism given by exponentiating the (inner) derivation $[\frac{1}{2\hbar}\mathbf{p}^2,-]$ (or, equivalently, conjugating with $e^{\frac{1}{2\hbar}\mathbf{p}^2}$), and thus gives back the automorphism of the continuous model. 

\subsection{Time reversal anti-involution and quantum states}\label{ssec-3.7}

Let us now consider the time reversal action of $C_2$ on $\Dun^1$ from Subsection \ref{ssec-2.2}. The $\Dun^1$-algebra $\mathcal O^q$ carries a $C_2$-equivariant structure: for every open interval $I\subset\mathbb{R}$ we have an isomorphism
\begin{eqnarray*}
\tau:\mathcal O^q(I) & \longrightarrow & O^q(-I) \\
f & \longmapsto & \big(x\mapsto 
f(-x)\big)\,.
\end{eqnarray*}
In particular, $\tau(\delta_x)=\delta_{-x}$ and $\tau(\overline{\delta}_x)=\overline{\delta}_{-x}$; therefore, 
\[
\tau([\delta_0])=[\delta_0]\quad\textrm{and}\quad\tau([\delta_2-\delta_1])=[\delta_{-2}-\delta_{-1}]=[\delta_1-\delta_2]\,.
\]
Hence the induced anti-involution on the Weyl algebra is given by $\mathbf{q}\mapsto \mathbf{q}$ and $\mathbf{p}\mapsto-\mathbf{p}$, and the left module of coinvariants is its quotient by the left ideal generated by $\mathbf{p}-(-\mathbf{p})=2\mathbf{p}$. We thus obtain $\mathbf{K}[\mathbf{q}]$ with left action given by 
\[
\mathbf{q}\cdot\mathbf{q}^n=\mathbf{q}^{n+1}\quad\textrm{and}\quad\mathbf{p}\cdot\mathbf{q}^n=n\mathbf{q}^{n-1}\,.
\]

\section{Discrete free massive scalar field in $1d$}\label{sec-massive}

Let $\alpha\in\mathbb{K}^\times$, and consider the endomorphism
\[
Q_\alpha:\Map(\mathbb{Z},\mathbb{K}) \longrightarrow \Map(\mathbb{Z},\mathbb{K})
\]
defined as follows: for every $h:\mathbb{Z}\to\mathbb{K}$ and every $x\in\mathbb{Z}$, 
\[
(Q_\alpha h)(x):=h(x-1)-(\alpha+\alpha^{-1})h(x)+h(x+1)\,.
\]
\begin{rem}\label{remark-mass}
Whenever $\mathbb{K}=\mathbb{R}$ and $\alpha>0$, $\alpha+\alpha^{-1}\geq 2$ could be thought as $2+m^2$, where $m=|\alpha^{1/2}-\alpha^{-1/2}|$ is the mass. Then we observe that $Q_\alpha=Q_1+m^2\mathrm{id}$, where $Q_1$ coincides with the discrete Laplace operator $Q$ from the previous section (massless case). 
\end{rem}

\subsection{Classical observables}\label{ssec-4.1}

Let $a<b$ be real numbers. If $f:\mathbb{Z}\to\mathbb{K}$ is supported in $(a,b)$ then $Q_\alpha f$ is supported in $(a-1,b+1)$; therefore $Q_\alpha$ defines a morphism
\[
\Map\big(\mathbb{Z}\cap(a+1,b-1),\mathbb{K}\big)\overset{Q_\alpha}{\longrightarrow}\Map\big(\mathbb{Z}\cap(a,b),\mathbb{K}\big)\,,
\]
whose cone we denote by $\mathbb{V}_\alpha(a,b)$. 

\medskip

We can upgrade the assignment $(a,b)\mapsto \mathbb{V}_\alpha(a,b)$ to a $\Dun^R$-algebra with values in $(\Cpx,\oplus)$ for every $R>0$: the operation
\[
\mathbb{V}_\alpha(I_1)\oplus\cdots\oplus\mathbb{V}_\alpha(I_n)\to \mathbb{V}_\alpha(J)
\]
is given by the sum, as in Subsection \ref{ssec-3.1}. 

\medskip

Applying the symmetric algebra functor $\Sym$, that is a symmetric monoidal functor from $(\Cpx,\oplus)$ to $\big(\mathtt{Alg}_{\mathbf{Com}}(\Cpx,\otimes),\otimes\big)$, we get a $\Dun^R$-algebra $\mathcal O_\alpha^{c\ell}:=\Sym(\mathbb{V}_\alpha)$ with values in differential graded commutative algebras. 

\begin{rem}[shifted pairing and shifted Poisson structure]\label{rem-shifted}
    In fact, $\mathbb{V}_\alpha(a,b)$ carries an additional structure: a degree $1$ symmetric pairing $\langle\!\langle\text{-,-}\rangle\!\rangle$. The definition is exactly the same as in Subsection \ref{ssec-3.3}, as well as the proof that it is compatible with the differential (just replace $2$ by $\alpha+\alpha^{-1}$) and the fact that it is compatible with the $\Dun^R$-algebra structure. As a consequence, $\mathcal O_\alpha^{c\ell}$ becomes a $\Dun^R$-algebra in $\mathbb{P}_0$-algebras. 
\end{rem}

\subsection{Cohomology}\label{ssec-4.2}

Everything works exactly as in Subsection \ref{ssec-3.2}. For every function $f:\mathbb{Z}\to\mathbb{K}$ having support in $[m,n]$, with $m,n\in\mathbb{Z}$, $(Q_\alpha f)(m-1)=f(m)$ and $(Q_\alpha f)(n+1)=f(n)$. 
Hence if $Q_\alpha f=0$ then $f(m)=f(n)=0$, and thus $f$ has support in $[m+1,n-1]$. Hence, by induction, if $f$ has finite support and $Q_\alpha f=0$, then $f=0$: in other words, $Q_\alpha$ is injective on finitely supported maps. 
Therefore, the cohomology of $\mathbb{V}_\alpha(a,b)$ is concentrated in degree $0$. By the rank theorem, if $|b-a|>2$ then $H^0\big((\mathbb{V}_\alpha(a,b)\big)$ is two dimensional. 

\medskip

Let us introduce four functions $u,v,A,B:\mathbb{Z}\to\mathbb{K}$ that will hopefully ease later calculations: 
\begin{itemize}
    \item $u(x):=\alpha^x$. 
    \item $v(x):=u(-x)=\alpha^{-x}$. 
    \item $A(x):=\displaystyle\frac{u(x)+v(x)}{2}$. 
    \item $B(x):=\displaystyle\frac{u(x)-v(x)}{\alpha-\alpha^{-1}}=
    \frac{x}{|x|}\Big(\alpha^{|x|-1}+\alpha^{|x|-3}+\cdots+\alpha^{3-|x|}+\alpha^{1-|x|}\Big)$. \\
    In particular, $B(x)=x$ whenever $\alpha=1$.   
\end{itemize}
Observe that all these functions belong to the kernel of $Q_\alpha$: for instance 
\[
(Q_\alpha u)(x)=\alpha^{x-1}-(\alpha+\alpha^{-1})\alpha^x+\alpha^{x+1}=0\,.
\]
We therefore have a morphism 
\begin{eqnarray*}
\varphi:H^0\big(\mathbb{V}_\alpha\big) & \longrightarrow & \mathbb{K}\mathbf{q}\oplus\mathbb{K}\mathbf{p}\\
{[f]} & \longmapsto & \underbrace{\sum_{x\in\mathbb{Z}}f(x)A(x)}_{=\langle\!\langle f,\overline{A}\rangle\!\rangle}\mathbf{q}+\sum_{x\in\mathbb{Z}}f(x)B(x)\mathbf{p}\,.
\end{eqnarray*}
This is well-defined thanks to $A,B\in\ker(Q_\alpha)$ and the compatibility between the pairing and the differential\footnote{Remember the notation from Subsection \ref{ssec-3.3}: an overlined symbol is used to denote the suspension of the element given by the original symbol. }: 
\begin{eqnarray*}
\langle\!\langle Q_\alpha g,\overline{A}\rangle\!\rangle
&=&\sum_{x\in\mathbb{Z}}\big(g(x-1)-(\alpha+\alpha^{-1})g(x)+g(x+1)\big)A(x)\\
&=&\sum_{x\in\mathbb{Z}}g(x)\big(A(x+1)-(\alpha+\alpha^{-1})A(x)+A(x-1)\big)\\
&=&\langle\!\langle\overline{g}, Q_\alpha  A\rangle\!\rangle=0\,.
\end{eqnarray*}
We let the reader check that $\varphi$ is an isomorphism whenever $|b-a|>2$. 

\medskip

As a consequence, we get that, as a $\Dun^1$-algebra taking values in $(\Cpx,\oplus)$ (or even complexes equipped with a degree $1$ symmetric pairing, see Remark \ref{rem-shifted}), $\mathbb{V}_\alpha$ is locally constant: the isomorphism $\varphi$ provides an equivalence $\mathbb{V}_\alpha\,\tilde\to\,\gamma^*(\mathbb{K}\mathbf{q}\oplus\mathbb{K}\mathbf{p})$. 

\medskip

This implies that $\mathcal O_\alpha^{c\ell}=\Sym(\mathbb{V}_\alpha)$ is also a locally constant $\Dun^1$-algebra (taking values in differential graded commutative algebras -- or even $\mathbb{P}_0$-algebras, see Remark \ref{rem-shifted}): $\varphi$ gives an equivalence $\mathcal O_\alpha^{c\ell}\,\tilde\to\,\gamma^*\mathbb{K}[\mathbf{q},\mathbf{p}]$. 

\subsection{Quantum observables}\label{ssec-4.3}

We now deform the differential of $\mathcal O_\alpha^{c\ell}$ by adding the odd laplacian 
\[
\Delta=\sum_{x\in\mathbb{Z}}\frac{\partial^2}{\partial\overline\delta_x\partial\delta_x}
\]
as in Subsection \ref{ssec-3.4}: the new differential is $d_\hbar=d+\hbar\Delta$, where: 
\begin{itemize}
\item The original differential $d$ is defined on generators by $d\overline{g}:=Q_\alpha g$ and extended by the Leibniz rule. 
\item The symbol $\hbar$ can either be understood as a non-zero scalar ($\hbar\in\mathbb{K}^\times$) or as a formal parameter, as explained in Remark \ref{rem-hbar}. 
\end{itemize}
We therefore define quantum observables $\mathcal O_\alpha^q:=\big(\Sym(\mathbb{V}_\alpha),d_\hbar\big)$. 

\medskip

The following properties can be checked in an \textit{ad hoc} way, and are particular instances of more general facts about the quantization of constant $1$-shifted Poisson structures (see e.g.~\cite[§4.2.2]{CC}, or Subsection \ref{ssec-3.4} above):
\begin{enumerate}
\item Quantum observables $\mathcal O_\alpha^q$ still form a $\Dun^1$-algebra, but with values in $(\Cpx,\otimes)$. Indeed, the new differential contains an order $2$ operator, that is no longer a derivation for the commutative product unless the functions that we multiply have disjoint supports. This can be checked by hand, or deduced from the functoriality of the quantization procedure explained in \cite[§4.2.2]{CC} (note that this quantization procedure actually takes values in $\mathbb{BD}_0$-algebras, hence so does $O_\alpha^q$. 
\item The $\Dun^1$-algebra is locally constant (using a spectral sequence argument, or a deformation theoretic one, this follows from that $\mathcal O_\alpha^{c\ell}$ is locally constant). 
\end{enumerate}

\medskip

We now recall from Subsection \ref{ssec-2.1} the description of an associative algebra $A_\alpha$ such that $\mathcal O_\alpha^q\simeq \gamma^* A_\alpha$: 
\begin{itemize}
    \item Let $A_\alpha:=H^0\big(\mathcal O_\alpha^q(J)\big)$, with $J:=(-4,4)$. 
    \item The product $\star$ on $A$ is defined as
    \[
    \begin{matrix}
    A_\alpha & \tilde\longleftarrow & H^0\big(\mathcal O_\alpha^q(I_1)\big) &\\
    \otimes& & \otimes & \underset{\mathrm{product}}{\overset{\mathrm{factorization}}{\relbar\joinrel\relbar\joinrel\relbar\joinrel\longrightarrow}} 
    & H^0\big(\mathcal O_\alpha^q(J)\big)=A_\alpha\,,\\
    A_\alpha& \tilde\longleftarrow & H^0\big(\mathcal O_\alpha^q(I_2)\big) & 
    \end{matrix}
    \]
    for any $I_1,I_2\subset J$ such that $|I_1|>2$, $|I_2|>2$ and $I_1<I_2$. 
\end{itemize}
\begin{lem}
    Let $\mathrm{q}:=[\delta_0]$ and $\mathrm{p}:=\frac12[\delta_1-\delta_{-1}]$. Then $\mathrm{p}\star\mathrm{q}-\mathrm{q}\star\mathrm{p}=\hbar$. 
\end{lem}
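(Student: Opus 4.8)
The plan is to mimic the massless computation from Subsection~\ref{ssec-3.5}, using the functions $A$ and $B$ (equivalently $u,v$) to move representatives of cohomology classes to the left or to the right as needed, and then to identify the commutator by an explicit $d_\hbar$-exact primitive. Concretely, one first computes $\mathrm q\star\mathrm p$ using the ``easy'' direction: since $\delta_0$ is supported to the left of $\delta_1$ and $\delta_{-1}$ we may take $I_1\ni 0$ and $I_2\ni 1,-1$ with $I_1<I_2$ (e.g. $I_1=(-4,1/2)$, $I_2=(1/2,4)$ — wait, that puts $-1\in I_1$; instead use $I_1=(-4,-1/2)$? that excludes $0$… so one must first replace $[\delta_1-\delta_{-1}]$ by a representative supported in a right interval). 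This is the first genuine step: using $Q_\alpha$-exactness, rewrite $\tfrac12[\delta_1-\delta_{-1}]$ as a class supported in $\mathbb Z\cap I_2$ for a suitable right interval $I_2$, and similarly rewrite it as a class supported in a left interval for the other product; the recursion $d_\hbar\overline\delta_x=\delta_{x-1}-(\alpha+\alpha^{-1})\delta_x+\delta_{x+1}$ is the analogue of the two displayed identities in Subsection~\ref{ssec-3.5} and does exactly this.

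Once both orderings have representatives with well-ordered disjoint supports, each product $\star$ is literally the commutative product of functions at the cochain level (as noted in the massless proof), so $\mathrm p\star\mathrm q-\mathrm q\star\mathrm p$ becomes the class of an explicit quadratic expression in the $\delta_x\cdot\delta_y$. The second step is then to exhibit a primitive: find an element $w\in\Sym^2(\mathbb V_\alpha(J))$, a linear combination of terms $\overline\delta_i\cdot\delta_j$, such that $d_\hbar w$ equals that quadratic expression minus $\hbar$. The term $-\hbar$ appears because $\Delta(\overline\delta_i\cdot\delta_j)=\hbar\,\delta_{ij}$ (Kronecker), exactly as in the three bullet-point identities of the massless case; the rest of $d_\hbar w$ comes from $d\overline\delta_i=Q_\alpha\delta_i$, which now carries the coefficient $\alpha+\alpha^{-1}$ instead of $2$. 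So the computation is a bookkeeping exercise with $\alpha+\alpha^{-1}$ in place of $2$, and one expects the normalization factors $\tfrac12$ in $\mathrm p$ (and the $A,B$ of Subsection~\ref{ssec-4.2}) to be precisely what makes the $\alpha$-dependence cancel so that the answer is again $\hbar$.

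The main obstacle is finding the correct primitive $w$ and the correct left/right representatives simultaneously: unlike the massless case, shifting $\delta_{\pm1}$ across $0$ via $Q_\alpha$-exactness introduces $\alpha$-dependent coefficients into the representatives, so the quadratic expression whose class we must compute is messier, and one has to be careful that the chosen representatives actually have supports inside the prescribed subintervals $I_1,I_2\subset J=(-4,4)$ (this is why $J$ was enlarged from $(-3,3)$ to $(-4,4)$: we need room to shift by one or two units on each side). A clean way to organize this is to use the section of $\varphi$ from the analogue of the Remark after \eqref{eq:map-coho} — i.e. pick, for each subinterval, a preferred representative of each cohomology class supported near one endpoint — and to verify $\varphi(\mathrm q)=\mathbf q$, $\varphi(\mathrm p)=\mathbf p$ first, so that one knows a priori that $\{\mathrm q,\mathrm p\}$ is a basis and that the commutator is a scalar; then a single well-chosen evaluation (or the explicit primitive) pins down that scalar as $\hbar$.

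After establishing the lemma, the same argument as in Subsection~\ref{ssec-3.5} (passing to the associated graded when $\hbar$ is a scalar, or setting $\hbar=0$ when it is a parameter) upgrades $\mathbf q\mapsto\mathrm q$, $\mathbf p\mapsto\mathrm p$ to an isomorphism from the Weyl algebra $\faktor{\mathbf K\langle\mathbf q,\mathbf p\rangle}{(\mathbf p\mathbf q-\mathbf q\mathbf p=\hbar)}$ onto $(A_\alpha,\star)$, exactly as claimed in the paper.
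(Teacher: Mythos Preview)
Your proposal is a plan rather than a proof: you never actually carry out either of the two steps you announce (shifting the support of $\tfrac12[\delta_1-\delta_{-1}]$ to one side, and exhibiting the primitive $w$), and you explicitly flag the second step as ``the main obstacle''. The strategy would work, but the paper's proof makes two choices that bypass both difficulties, and in particular does \emph{not} mimic the massless computation of Subsection~\ref{ssec-3.5}.

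First, the paper moves $\mathrm q=[\delta_0]$, not $\mathrm p$. Since $\delta_0$ is a single delta, subtracting $d_\hbar\big(\overline\delta_1+(\alpha+\alpha^{-1})\overline\delta_2\big)$ (resp.\ its mirror image) pushes its support to $\{2,3\}$ (resp.\ $\{-3,-2\}$) in one stroke, while $\delta_1-\delta_{-1}$ stays put in both products; this avoids the interval gymnastics you ran into in your first paragraph. Second, and more to the point, the paper never searches for an explicit quadratic primitive. It uses instead the general identity
\[
d_\hbar(\overline f\cdot g)=d_\hbar(\overline f)\,g+\hbar\langle\!\langle\overline f,g\rangle\!\rangle
\qquad\Longrightarrow\qquad
\big[d_\hbar(\overline f)\cdot g\big]=-\hbar\langle\!\langle\overline f,g\rangle\!\rangle\,.
\]
After subtracting the two ordered products the commutator is already of the form $\big[d_\hbar(\overline f)\cdot(\delta_1-\delta_{-1})\big]$ for an explicit $\overline f$, and one simply reads off $-\hbar\langle\!\langle\overline f,\delta_1-\delta_{-1}\rangle\!\rangle=2\hbar$. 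So the massive proof is not ``a bookkeeping exercise with $\alpha+\alpha^{-1}$ in place of $2$'': it is a shorter argument than the massless one, with no primitive-hunting at all. Your route would eventually reach the same conclusion, but with substantially more algebra and without the conceptual payoff of the pairing identity.
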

\begin{rem}
Observe that $\varphi(\mathrm{q})=\mathbf{q}$ and $\varphi(\mathrm{p})=\mathbf{p}$. 
\end{rem}
\begin{proof}
First observe that 
\begin{eqnarray*}
\delta_0-d_\hbar\big(\overline\delta_1+(\alpha+\alpha^{-1})\overline\delta_2\big) & = & \delta_0-Q_\alpha\big(\delta_1+(\alpha+\alpha^{-1})\delta_2\big) \\
& = & \big((\alpha+\alpha^{-1})^2-1)\delta_2-(\alpha+\alpha^{-1})\delta_3
\end{eqnarray*}
is supported in $\{2,3\}$ (we have used that $\Delta$ sends generators to $0$). 
Therefore, by picking $I_1=(-2,3/2)$ and $I_2=(3/2,4)$, we obtain 
\begin{eqnarray*}
[\delta_1-\delta_{-1}]\star[\delta_0] 
& = &
[\delta_1-\delta_{-1}]\star\left[\delta_0-d_\hbar\big(\overline\delta_1+(\alpha+\alpha^{-1})\overline\delta_2\big)
\right]\\
& = &
\left[(\delta_1-\delta_{-1})\cdot\Big(\delta_0-d_\hbar\big(\overline\delta_1+(\alpha+\alpha^{-1})\overline\delta_2\big)\Big)\right]
\end{eqnarray*}
Similarly, $\delta_0-d_\hbar\big(\overline\delta_{-1}+(\alpha+\alpha^{-1})\overline\delta_{-2}\big)$ is supported in $\{-3,-2\}$. Thus, picking $I_1=(-4,-3/2)$ and $I_2=(-3/2,2)$, we get
\[
[\delta_0]\star [\delta_1-\delta_{-1}]=\left[\Big(\delta_0-d_\hbar\big(\overline\delta_{-1}+(\alpha+\alpha^{-1})\overline\delta_{-2}\big)\Big)\cdot(\delta_1-\delta_{-1})\right]\,.
\]
We now compute the commutator: 
 \begin{eqnarray*}
[\delta_1-\delta_{-1}]\star[\delta_0]-[\delta_0]\star [\delta_1-\delta_{-1}] 
& = & 
\Big[d_\hbar\big((\alpha+\alpha^{-1})\overline\delta_{-2}+\overline\delta_{-1}-\overline\delta_1-(\alpha+\alpha^{-1})\overline\delta_2\big)\cdot(\delta_1-\delta_{-1})\Big] \\
& = & 
-\hbar\left\langle\!\left\langle(\alpha+\alpha^{-1})\overline\delta_{-2}+\overline\delta_{-1}-\overline\delta_1-(\alpha+\alpha^{-1})\overline\delta_2\,,\,\delta_1-\delta_{-1}\right\rangle\!\right\rangle \\
& = & 2\hbar\,.
\end{eqnarray*}
Here we have used the following general fact: for finitely supported $f,g:\mathbb{Z}\to\mathbb{K}$, $d_\hbar(\overline{f}\cdot g)=d_\hbar(\overline{f})g+\hbar\langle\!\langle\overline{f},g\rangle\!\rangle$ and thus $\big[d_\hbar(\overline{f})g\big]=-\hbar\langle\!\langle\overline{f},g\rangle\!\rangle$. 
\end{proof}
As a consequence of the above Lemma, we obtain an algebra morphism 
\[
\faktor{\mathbf{K}\langle\mathbf{q},\mathbf{p}\rangle}{(\mathbf{p}\mathbf{q}-\mathbf{q}\mathbf{p}=\hbar)}\longrightarrow (A_\alpha,\star)\,,
\]
that can be shown to be an isomorphism: again, this can be checked by passing to associated graded (whenever $\hbar$ is a scalar) or by reasoning modulo $\hbar$ (whenever $\hbar$ is a variable). 

\medskip

It is remarkable to notice that our algebra of quantum observables in fact doesn't depend on the mass. We will see in the next Subsection that the mass ``matters back'' when time evolves (in a similar way as in \cite[Ch.~4 Sect.~3]{CG1}). 

\subsection{Time evolution}\label{ssec-4.4}

As in Subsection \ref{ssec-3.6}, we consider the action of $\mathbb{Z}$ on the $\Dun^1$-algebra $\mathcal O_\alpha^q$: for every $n\in\mathbb{Z}$ and every open interval $I\subset \mathbb{R}$, we have an isomorphism 
\begin{eqnarray*}
\mathcal O^q(I) & \longrightarrow & \mathcal O^q(I+n) \\
f & \longmapsto & \big(x\mapsto 
f(x-n)\big)\,.
\end{eqnarray*}
In particular, $1\in\mathbb{Z}$ acts by sending $\delta_x$ to $\delta_{x+1}$ and $\overline{\delta}_x$ to $\overline{\delta}_{x+1}$. 

\begin{lem}\label{lemma-automorphism}
The induced automorphism of the Weyl algebra \[
\faktor{\mathbf{K}\langle\mathbf{q},\mathbf{p}\rangle}{(\mathbf{p}\mathbf{q}-\mathbf{q}\mathbf{p}=\hbar)}
\]
is given by 
\[
\mathbf{q}\longmapsto \frac{\alpha+\alpha^{-1}}{2}\mathbf{q}+\mathbf{p}
\quad\textrm{and}\quad\mathbf{p}\longmapsto\left(\frac{\alpha-\alpha^{-1}}{2}\right)^2\mathbf{q}+\frac{\alpha+\alpha^{-1}}{2}\mathbf{p}\,.
\]
\end{lem}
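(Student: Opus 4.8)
The strategy is to compute, at the level of $H^0$, the image of the generators $\mathrm{q}=[\delta_0]$ and $\mathrm{p}=\tfrac12[\delta_1-\delta_{-1}]$ under the translation-by-$1$ automorphism, which sends $\delta_x\mapsto\delta_{x+1}$, and then re-express the results in terms of $\mathrm{q}$ and $\mathrm{p}$ using the relations in $H^0\big(\mathcal O_\alpha^q(I)\big)$. Concretely, translation sends $\mathrm{q}\mapsto[\delta_1]$ and $\mathrm{p}\mapsto\tfrac12[\delta_2-\delta_0]$. So the whole content of the lemma is to show that
\[
[\delta_1]=\tfrac{\alpha+\alpha^{-1}}{2}[\delta_0]+\tfrac12[\delta_1-\delta_{-1}]
\quad\textrm{and}\quad
\tfrac12[\delta_2-\delta_0]=\left(\tfrac{\alpha-\alpha^{-1}}{2}\right)^2[\delta_0]+\tfrac{\alpha+\alpha^{-1}}{2}\cdot\tfrac12[\delta_1-\delta_{-1}]
\]
in cohomology. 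Since the associative-algebra structure is determined by the equivalence $\varphi$ of Subsection~\ref{ssec-4.2}, and $\varphi(\mathrm{q})=\mathbf{q}$, $\varphi(\mathrm{p})=\mathbf{p}$, it suffices to check these two identities after applying $\varphi$, i.e. to check equalities of coefficients of $\mathbf q$ and $\mathbf p$.

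First I would assemble the cohomological relations coming from the differential: since $\Delta$ kills generators, $d_\hbar\overline\delta_x=Q_\alpha\delta_x=\delta_{x-1}-(\alpha+\alpha^{-1})\delta_x+\delta_{x+1}$, so in $H^0$ one has $[\delta_{x+1}]=[(\alpha+\alpha^{-1})\delta_x-\delta_{x-1}]$. From the case $x=0$ this already gives $[\delta_1]=(\alpha+\alpha^{-1})[\delta_0]-[\delta_{-1}]$; combined with $[\delta_1-\delta_{-1}]=[\delta_1]-[\delta_{-1}]$ one solves the $2\times 2$ linear system to get $[\delta_1]$ and $[\delta_{-1}]$ in terms of $[\delta_0]$ and $[\delta_1-\delta_{-1}]$, which yields the first displayed identity. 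For the second, apply the relation at $x=1$ to get $[\delta_2]=(\alpha+\alpha^{-1})[\delta_1]-[\delta_0]$, substitute the already-found expression for $[\delta_1]$, and simplify; the coefficient of $[\delta_0]$ will collapse to $\big(\tfrac{\alpha-\alpha^{-1}}{2}\big)^2$ using $(\alpha+\alpha^{-1})^2/2-(\alpha+\alpha^{-1})^2/4\cdot 2 \cdots$ — in any case it is the elementary identity $\big(\tfrac{\alpha+\alpha^{-1}}{2}\big)^2-1=\big(\tfrac{\alpha-\alpha^{-1}}{2}\big)^2$.

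Alternatively — and this is probably the cleanest write-up — one can bypass the linear algebra entirely by working directly with $\varphi$: compute $\varphi[\delta_1]=A(1)\mathbf q+B(1)\mathbf p=\tfrac{\alpha+\alpha^{-1}}{2}\mathbf q+\mathbf p$ (using $B(1)=1$), and $\varphi[\delta_2-\delta_0]=(A(2)-A(0))\mathbf q+(B(2)-B(0))\mathbf p$, then evaluate $A(2)=\tfrac{\alpha^2+\alpha^{-2}}{2}$, $A(0)=1$, $B(2)=\alpha+\alpha^{-1}$, $B(0)=0$, and check $A(2)-1=2\big(\tfrac{\alpha-\alpha^{-1}}{2}\big)^2$ and $B(2)=\alpha+\alpha^{-1}$. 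Since $\varphi$ is an isomorphism onto $\mathbb{K}\mathbf q\oplus\mathbb{K}\mathbf p$, this determines $[\delta_1]$ and $\tfrac12[\delta_2-\delta_0]$ as the asserted combinations of $\mathrm q$ and $\mathrm p$, proving the lemma. The only mild subtlety — hardly an obstacle — is to make sure the intervals are large enough that all the classes involved ($\delta_{-1},\delta_0,\delta_1,\delta_2$ and their images) live in a common $\mathcal O_\alpha^q(I)$ with $|I|>2$ on which $\varphi$ is an isomorphism, and that the translation map is an equivariant map of the $\Dun^1$-algebra so that it genuinely induces an algebra automorphism; both are already granted by Subsections~\ref{ssec-2.2}, \ref{ssec-4.2} and~\ref{ssec-4.3}.
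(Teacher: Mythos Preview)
Your proposal is correct, and your ``alternative'' approach via $\varphi$ is exactly the paper's proof: the paper simply notes that translation by $1$ sends $\mathrm{q}\mapsto[\delta_1]$ and $\mathrm{p}\mapsto\tfrac12[\delta_2-\delta_0]$, then computes $\varphi([\delta_1])$ and $\varphi([\delta_2-\delta_0])$ directly from the values of $A$ and $B$. Your first approach (solving the linear system coming from the relations $[\delta_{x+1}]=(\alpha+\alpha^{-1})[\delta_x]-[\delta_{x-1}]$) is also fine but is just a longer route to the same computation.
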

\begin{proof}
The automorphism sends $\mathrm{q}=[\delta_0]$ to $[\delta_1]$ and $\mathrm{p}=\frac12[\delta_1-\delta_{-1}]$ to $\frac12[\delta_2-\delta_0]$. 
Recall that on generators both differentials (the original and the deformed one) coincide, hence we can use $\varphi$ to compute what their classes gives in the Weyl algebra: 
\[
\varphi([\delta_1])=\frac{\alpha+\alpha^{-1}}{2}\mathbf{q}+\mathbf{p}
\quad
\textrm{and}
\quad
\varphi([\delta_2-\delta_0])=\frac{(\alpha-\alpha^{-1})^2}{2}\mathbf{q}+(\alpha+\alpha^{-1})\mathbf{p}\,.
\]
\end{proof}

\begin{rem}
According to \cite[Ch.~4 Sect.~3]{CG1} the (inner) derivation given by infinitesimal translation of the continuous model is $\frac{1}{2\hbar}[\mathbf{p}^2-m^2\mathbf{q}^2,-]$, sending $\mathbf{q}$ to $\mathbf{p}$ and $\mathbf{p}$ to $m^2\mathbf{q}$. A simple calculation shows that, after exponentiation, this gives the following automorphism of the Weyl algebra: 
\[
\mathbf{q}\longmapsto \cosh(m)\mathbf{q}+\frac{\sinh(m)}{m}\mathbf{p}
\quad\textrm{and}\quad\mathbf{p}\longmapsto m\sinh(m)\mathbf{q}+\cosh(m)\mathbf{p}\,.
\]
This doesn't quite coincide with the automorphism given by the discrete model (see Lemma \ref{lemma-automorphism}). 
However, in the context of Remark \ref{remark-mass} and very informally, for 
$\alpha\to 1^+$, 
\[
m=|\alpha^{1/2}-\alpha^{-1/2}|\sim\log(\alpha)\quad\textrm{and}\quad m\sim\sinh(m)\,.
\]
Therefore, 
\[
\cosh(m)\sim {\frac{\alpha+\alpha^{-1}}{2}}\,, \quad
{\frac{\sinh(m)}{m}}\sim 1\quad\textrm{and}\quad 
m\sinh(m)\sim {\left(\frac{\alpha-\alpha^{-1}}{2}\right)^2}\,.
\]
This means that the discrete time evolution is ``close'' to the continuous one whenever $\alpha\to 1^+$. 

Nevertheless, unlike with the algebra of observables (that we obtain without taking a continuum limit), it seems that if one wants to recover the time evolution of the continuous model then one should probably take an appropriate continuum limit. 
\end{rem}

\subsection{Fock module}\label{ssec-4.5}

The $C_2$-equivariant structure on the $\Dun^1$-algebra $\mathcal O_\alpha^q$ is exactly the same as the one in Subsection \ref{ssec-3.7}: for every open interval $I\subset\mathbb{R}$ we have an isomorphism
\begin{eqnarray*}
\tau:\mathcal O_\alpha^q(I) & \longrightarrow & O_\alpha^q(-I) \\
f & \longmapsto & \big(x\mapsto 
f(-x)\big)\,.
\end{eqnarray*}

\medskip

The induced anti-involution of $(A,\star)$, that we still denote $\tau$, therefore satisfies $\tau(\mathrm{q})=\mathrm{q}$ and $\tau(\mathrm{p})=-\mathrm{p}$. Hence, through the isomorphism constructed in Subsection \ref{ssec-4.3}, we get the anti-involution of the Weyl algebra defined by $\mathbf{q}\mapsto\mathbf{q}$ and $\mathbf{p}\mapsto-\mathbf{p}$. 

\medskip

The associated coinvariant left module of the Weyl algebra, that is the quotient of the Weyl algebra by the left ideal generated by $\mathbf{p}$, is $\mathbf{K}[\mathbf{q}]$ equipped with the following module structure: for every $n\in\mathbb{N}$, 
\[
\mathbf{q}\cdot\mathbf{q}^n=\mathbf{q}^{n+1}
\quad\textrm{and}\quad
\mathbf{p}\cdot\mathbf{q}^n=n\mathbf{q}^{n-1}\,.
\]


\begin{thebibliography}{XXX}

\bibitem{CC}D.~Calaque \& V.~Carmona, Algebras over not too little discs, preprint \href{https://www.arxiv.org/abs/2407.18192}{arXiv:2407.18192}. 

\bibitem{CEG}I.~Contreras, C.~Elliott \& O.~Gwilliam, 
Defects via Factorization Algebras, 
Letters in Mathematical Physics \textbf{113} (2023), no. 2, article no. 46. 

\bibitem{CG1}K.~Costello \& O.Gwilliam, 
\emph{Factorization Algebras in Quantum Field Theory} vol.~1, 
New Math. Monogr. \textbf{31}, Cambridge University Press, Cambridge, 2017. 

\bibitem{CG2}K.~Costello \& O.Gwilliam, 
\emph{Factorization Algebras in Quantum Field Theory} vol.~2, 
New Math. Monogr. \textbf{41}, Cambridge University Press, Cambridge, 2021. 

\bibitem{GRW}O.~Gwilliam, E.~Rabinovich \& B.R.~Williams, 
Factorization algebras and abelian CS/WZW-type
correspondences, 
Pure and Applied Mathematics Quarterly
\textbf{18} (2022), no.~4, 1485--1553. 

\end{thebibliography}
\end{document}